\def\NZQ{\mathbb}               % the font for N,Z,Q,R,C
\def\ZZ{{\NZQ Z}}
\def\RR{{\NZQ R}}
\def\frk{\mathfrak}               % font for "Fraktur"
\def\Phi{{\frk N}}
\def\ab{{\bold a}}
\def\bb{{\bold b}}
\def\eb{{\bold e}}
\def\vb{{\bold v}}
\def\wb{{\bold w}}
\def\xb{{\bold x}}
\def\yb{{\bold y}}
\def\opn#1#2{\def#1{\operatorname{#2}}} % to make operators
\opn\chara{char} 
\opn\length{\ell} 
\opn\pd{pd} 
\opn\rk{rk}
\opn\projdim{proj\,dim} 
\opn\injdim{inj\,dim} 
\opn\rank{rank}
\opn\depth{depth} 
\opn\grade{grade} 
\opn\height{height}
\opn\embdim{emb\,dim} 
\opn\codim{codim}
\opn\Tr{Tr} 
\opn\bigrank{big\,rank}
\opn\superheight{superheight}
\opn\lcm{lcm}
\opn\trdeg{tr\,deg}%\emph{
\opn\reg{reg} 
\opn\lreg{lreg} 
\opn\ini{in} 
\opn\lpd{lpd}
\opn\size{size}
\opn\mult{mult}
\opn\dist{dist}
\opn\cone{cone}
\opn\lex{lex}
\opn\rev{rev}
\opn\div{div} \opn\Div{Div} \opn\cl{cl} \opn\Cl{Cl}
\opn\Spec{Spec} \opn\Supp{Supp} \opn\supp{supp} \opn\Sing{Sing}
\opn\Ass{Ass} \opn\Min{Min}
\opn\Ann{Ann} \opn\Rad{Rad} \opn\Soc{Soc}
\opn\Syz{Syz} \opn\Im{Im} \opn\Ker{Ker} \opn\Coker{Coker}
\opn\Am{Am} \opn\Hom{Hom} \opn\Tor{Tor} \opn\Ext{Ext}
\opn\End{End} \opn\Aut{Aut} \opn\id{id} \opn\ini{in}
\opn\nat{nat}
\opn\pff{pf}%   \pf exists already
\opn\Pf{Pf} \opn\GL{GL} \opn\SL{SL} \opn\mod{mod} \opn\ord{ord}
\opn\Gin{Gin}
\opn\Hilb{Hilb}\opn\adeg{adeg}\opn\std{std}\opn\ip{infpt}
\opn\Pol{Pol}
\opn\sat{sat}
\opn\Var{Var}
\opn\Gen{Gen}
\opn\aff{aff} \opn\con{conv} \opn\relint{relint} \opn\st{st}
\opn\lk{lk} \opn\cn{cn} \opn\core{core} \opn\vol{vol}
\opn\link{link} \opn\star{star}
\opn\gr{gr}
\def\Ec{{\mathcal E}}
\def\Mc{{\mathcal M}}
\def\Hc{{\mathcal H}}
\def\Gc{{\mathcal G}}
\def\Pc{{\mathcal P}}
\def\Qc{{\mathcal Q}}
\def\Rc{{\mathcal R}}
\def\pot#1#2{#1[\kern-0.28ex[#2]\kern-0.28ex]}
\opn\dirlim{\underrightarrow{\lim}}
\opn\inivlim{\underleftarrow{\lim}}
\let\to=\rightarrow
\def\Implies{\ifmmode\Longrightarrow \else
	\unskip${}\Longrightarrow{}$\ignorespaces\fi}
\def\implies{\ifmmode\Rightarrow \else
	\unskip${}\Rightarrow{}$\ignorespaces\fi}
\def\iff{\ifmmode\Longleftrightarrow \else
	\unskip${}\Longleftrightarrow{}$\ignorespaces\fi}
\newtheorem{Theorem}{Theorem}[section]
\newtheorem{Lemma}[Theorem]{Lemma}
\newtheorem{Corollary}[Theorem]{Corollary}
\newtheorem{Example}[Theorem]{Example}
\newtheorem*{acknowledgement}{Acknowledgment}
\let\epsilon\varepsilon
\let\phi=\varphi
\let\kappa=\varkappa
\def\qed{\ifhmode\textqed\fi
	\ifmmode\ifinner\quad\qedsymbol\else\dispqed\fi\fi}
\def\textqed{\unskip\nobreak\penalty50
	\hskip2em\hbox{}\nobreak\hfil\qedsymbol
	\parfillskip=0pt \finalhyphendemerits=0}
\def\dispqed{\rlap{\qquad\qedsymbol}}
\opn\dis{dis}
\opn\height{height}
\opn\dist{dist}
\def\pnt{{\raise0.5mm\hbox{\large\bf.}}}
\opn\Lex{Lex}
\opn\conv{conv}
\begin{document}
%%%%%%%%%%%%%%%%%%%%%%%%%%%%%%%%%%%%%%%%
%% title
%%%%%%%%%%%%%%%%%%%%%%%%%%%%%%%%%%%%%%%%
\title{Reflexive polytopes arising from perfect graphs}
\author[T. Hibi]{Takayuki Hibi}
\address[Takayuki Hibi]{Department of Pure and Applied Mathematics,
	Graduate School of Information Science and Technology,
	Osaka University,
	Suita, Osaka 565-0871, Japan}
\email{hibi@math.sci.osaka-u.ac.jp}
\author[A. Tsuchiya]{Akiyoshi Tsuchiya}
\address[Akiyoshi Tsuchiya]{Department of Pure and Applied Mathematics,
	Graduate School of Information Science and Technology,
	Osaka University,
	Suita, Osaka 565-0871, Japan}
\email{a-tsuchiya@ist.osaka-u.ac.jp}

\subjclass[2010]{13P10, 52B20}
\date{}
\keywords{reflexive polytope, integer decomposition property, perfect graph, Ehrhart $\delta$-polynomial, Gr\"{o}bner basis}
%\thanks{The second author was partially supported by Grant-in-Aid for JSPS Fellows 16J01549.}

\begin{abstract}
Reflexive polytopes form one of the distinguished classes of lattice polytopes.
Especially reflexive polytopes which possess the integer decomposition property
are of interest.  In the present paper, by virtue of the algebraic technique on
Gr\"onbner bases, a new class of reflexive polytopes which possess the integer
decomposition property and which arise from perfect graphs will be presented.
Furthermore, the Ehrhart $\delta$-polynomials of these polytopes will be studied.
\end{abstract} 

\maketitle
\section*{Background}
The reflexive polytope is one of the keywords belonging to the current trends 
on the research of convex polytopes.  In fact, many authors have studied 
reflexive polytopes from viewpoints of combinatorics, commutative algebra
and algebraic geometry.
It is known that reflexive polytopes correspond to Gorenstein toric Fano varieties, and they are related with
mirror symmetry (see, e.g., \cite{mirror,Cox}).
In each dimension there exist only finitely many reflexive polytopes 
up to unimodular equivalence (\cite{Lag}) 
and all of them are known up to dimension $4$ (\cite{Kre}).
Moreover, every lattice polytope is a face of a reflexive polytope (\cite{refdim}).
We are especially interested in reflexive polytopes with the integer
decomposition property, where the integer decomposition property is
particularly important in the theory and application of integer
programing \cite[\S 22.10]{integer}.
A lattice polytope which possesses the integer decomposition property is  normal and very ample.
These properties play important roles in algebraic  geometry.
Hence to find new classes of reflexive polytopes with the integer decomposition property is one of the most important problem.
For example, the following classes of reflexive polytopes with the integer decomposition property are known:
\begin{itemize}
	\item Centrally symmetric configurations (\cite{cent}).
	\item Reflexive polytopes arising from the order polytopes and the chain polytopes of finite partially ordered sets (\cite{twin,orderchain,volOC,omega}).
	\item Reflexive polytopes arising from the stable sets polytopes of perfect graphs (\cite{harmony}).
\end{itemize}

Following the previous work \cite{harmony} 
the present paper discusses a new class of reflexive polytopes which possess the integer decomposition property and which arise from perfect graphs.

\begin{acknowledgement}{\rm
		The authors would like to thank anonymous referees for reading the manuscript carefully.
		The second author is partially supported by Grant-in-Aid for JSPS Fellows 16J01549.
	}
\end{acknowledgement}

\section{Perfect graphs and reflexive polytopes}
Recall that a {\em lattice polytope} is a convex polytope 
all of whose vertices have integer coordinates.
We say that a lattice polytope $\Pc \subset \RR^d$ of dimension $d$ is 
{\em reflexive} if the origin of $\RR^d$ belongs to the interior of $\Pc$ and 
if the dual polytope 
\[
\Pc^{\vee} = \{ {\bf x} \in \RR^{d} \, : \, \langle {\bf x}, {\bf y} \rangle \le 1,
\, \forall {\bf y} \in \Pc \}
\]
is again a lattice polytope.  Here $\langle {\bf x}, {\bf y} \rangle$
is the canonical inner product of $\RR^d$. 
A lattice polytope $\Pc \subset \RR^d$ possesses the 
{\em integer decomposition property} if, for each integer $n \geq 1$ 
and for each $\ab \in n\Pc \cap \ZZ^d$,
where $n \Pc = \{ n \ab \, : \, \ab \in \Pc \}$,  
there exist $\ab_1,\ldots,\ab_n$ belonging to $\Pc \cap \ZZ^d$ with $\ab=\ab_1+\cdots+\ab_n$.

Let $G$ be a finite simple graph on the vertex set $[d] = \{1, \ldots, d \}$
and $E(G)$ the set of edges of $G$.
(A finite graph $G$ is called simple if $G$ possesses no loop and no multiple edge.)
A subset $W \subset [d]$ is called {\em stable}  
if, for all $i$ and $j$ belonging to $W$ with $i \neq j$,
one has $\{i,j\} \not\in E(G)$.
We remark that a stable set is often called an {\em independent set}.
A {\em clique} of $G$ is a subset $W \subset [d]$
which is a stable set of the complementary graph $\overline{G}$ of $G$.
The {\em chromatic number} of $G$ is the smallest integer $t \geq 1$ for which
there exist stable set $W_{1}, \ldots, W_{t}$ of $G$ with
$[d] = W_{1} \cup \cdots \cup W_{t}$.
A finite simple graph $G$ is said to be {\em perfect} 
(\cite{sptheorem}) if, 
for any induced subgraph $H$ of $G$
including $G$ itself,
the chromatic number of $H$ is
equal to the maximal cardinality of cliques of $H$.
The complementary graph of a perfect graph is perfect (\cite{sptheorem}).

Let ${\bf e}_1, \ldots, \eb_{d}$ denote 
the standard coordinate unit vectors of ${\RR}^d$.
Given a subset $W \subset [d]$,
we may associate $\rho(W) = \sum_{j \in W} {\eb}_j \in {\RR}^d$.
In particular $\rho(\emptyset)$ is the origin ${\bf 0}_d$ of $\RR^d$.
Let $S(G)$ denote the set of stable sets of $G$.
One has $\emptyset \in S(G)$ and $\{ i \} \in S(G)$
for each $i \in [d]$.
% The {\em incidence matrix} $A_{S(G)}$ of $S(G)$ is the matrix whose columns are those $\rho(W)$ with $W \in S(G)$.
% It is known that for any pair of finite simple graphs $G_1$ and $G_2$ on $[d]$, $A_{S(G_1)}$ and $A_{S(G_2)}$ are of harmony (\cite[Lemma 2.1.]{harmony}).
The {\em stable set polytope} $\Qc_G \subset \RR^{d}$
of $G$ is the $(0, 1)$-polytope which is the convex hull of 
$\{\rho(W) \, : \, W \in S(G) \}$ in $\RR^d$.
One has $\dim \Qc_G = d$.
 
Given lattice polytopes $\Pc \subset \RR^d$ and $\Qc \subset \RR^d$ of dimension $d$, 
we introduce the lattice polytopes $\Gamma(\Pc,\Qc) \subset \RR^{d}$ and
$\Omega(\Pc,\Qc) \subset \RR^{d+1}$ with
\[
\Gamma(\Pc,\Qc)=\textnormal{conv}\{\Pc\cup (-\Qc)\}, 
\]\[
\Omega(\Pc,\Qc)=\textnormal{conv}\{(\Pc\times\{1\}) \cup (-\Qc\times\{-1\})\}.
\]
%Here $\Pc\times\{1\} = \left\{ \left(
%\begin{array}{c}
%\ab\\
%1
%\end{array}
%\right) \in \RR^{d+1} \, : \, \ab \in \Pc \right\}$.

It is natural to ask when $\Gamma(\Qc_{G_1},\Qc_{G_2})$ is reflexive and
when $\Omega(\Qc_{G_1},\Qc_{G_2})$ is reflexive, where $G_1$ and $G_2$ are
finite simple graphs on $[d]$.  The former is completely solved 
by \cite{harmony} and the later is studied in this paper.  In fact,

\begin{Theorem}
\label{Gorenstein}
Let $G_1$ and $G_2$ be finite simple graphs on $[d]$.
\begin{enumerate}
\item[{\rm (a)}]
{\rm (\cite{harmony})}
The following conditions are equivalent:
\begin{itemize}
	\item[(i)] The lattice polytope $\Gamma(\Qc_{G_1},\Qc_{G_2})$ is reflexive;
	\item [(ii)]The lattice polytope $\Gamma(\Qc_{G_1},\Qc_{G_2})$ is reflexive and possesses the integer decomposition property;
	\item [(iii)]Both $G_1$ and $G_2$ are perfect.
\end{itemize}
\item[{\rm (b)}]
The following conditions are equivalent:
\begin{itemize}
	\item[(i)] The lattice polytope $\Omega(\Qc_{G_1},\Qc_{G_2})$ possesses the integer decomposition property;
	\item [(ii)]The lattice polytope $\Omega(\Qc_{G_1},\Qc_{G_2})$ is reflexive and possesses the integer decomposition property;
	\item [(iii)]Both $G_1$ and $G_2$ are perfect.
\end{itemize}
\end{enumerate} 
\end{Theorem}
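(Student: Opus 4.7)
The implication (ii) $\Rightarrow$ (i) is trivial, so the remaining content is (i) $\Rightarrow$ (iii) and (iii) $\Rightarrow$ (ii).

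For (i) $\Rightarrow$ (iii), the plan is a slice reduction. Writing $\Omega := \Omega(\Qc_{G_1}, \Qc_{G_2})$, every point of $\Omega$ satisfies $|x_{d+1}| \leq 1$, so the top slice of $n\Omega$ at height $n$ is exactly $n\Qc_{G_1} \times \{n\}$. Given $\ab \in n\Qc_{G_1} \cap \ZZ^d$, the integer decomposition property of $\Omega$ provides a decomposition $(\ab, n) = \sum_{i=1}^n (\wb_i, t_i)$ with $(\wb_i, t_i) \in \Omega \cap \ZZ^{d+1}$. The constraints $t_i \in \{-1, 0, 1\}$ together with $\sum_i t_i = n$ force every $t_i = 1$, whence each $\wb_i$ is the indicator vector $\rho(W_i)$ of some stable set $W_i \in S(G_1)$. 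Hence $\Qc_{G_1}$ has the integer decomposition property, and symmetrically so does $\Qc_{G_2}$. Perfectness of $G_1$ and $G_2$ then follows from the known equivalence (cf.\ \cite{harmony} and the references therein) that the stable set polytope $\Qc_G$ has the integer decomposition property if and only if $G$ is perfect.

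For (iii) $\Rightarrow$ (ii), assume both $G_1$ and $G_2$ are perfect. By Chv\'atal's theorem,
\[
\Qc_{G_i} = \bigl\{\xb \in \RR^d_{\geq 0} : \textstyle\sum_{j \in C} x_j \leq 1 \text{ for every clique } C \text{ of } G_i\bigr\}.
\]
From this clique description one reads off the facets of $\Omega$: each has the form $\langle (\ab, c), (\xb, x_{d+1}) \rangle \leq 1$ with integer normal $(\ab, c) \in \ZZ^{d+1}$, obtained by pairing a clique inequality of $G_1$ (active at height $+1$) with one of $G_2$ (active at height $-1$). Because $(\mathbf{0}, 1)$ and $(\mathbf{0}, -1)$ are vertices of $\Omega$, the origin lies in the interior, and reflexivity follows.

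The integer decomposition property in (iii) $\Rightarrow$ (ii) is the heart of the matter, and the plan is the Gr\"obner basis technique. Consider the toric ideal $I_\Omega$ of the lattice point configuration consisting of the vertices of $\Omega$. The aim is to exhibit a squarefree initial ideal of $I_\Omega$ with respect to a carefully chosen reverse lexicographic monomial order. The Gr\"obner basis will combine the known squarefree quadratic Gr\"obner bases for the toric ideals of $\Qc_{G_1}$ and $\Qc_{G_2}$ individually (both available because $G_i$ is perfect) with additional ``bridge'' binomials encoding the interaction between the top copy $\Qc_{G_1} \times \{1\}$ and the bottom copy $-\Qc_{G_2} \times \{-1\}$. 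Sturmfels' theorem applied to the resulting squarefree initial ideal then yields normality of the toric ring $K[\Omega]$, equivalently the integer decomposition property of $\Omega$. The principal obstacle will be the explicit construction of the bridge binomials and the Buchberger-type verification that all $S$-pairs reduce to zero; this step exploits Chv\'atal's clique description on both sides of the prism simultaneously, and is precisely where the hypothesis that \emph{both} $G_1$ and $G_2$ are perfect is used in an essential way.
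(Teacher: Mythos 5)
Your reduction in (i) $\Rightarrow$ (iii) correctly shows that the integer decomposition property of $\Omega(\Qc_{G_1},\Qc_{G_2})$ passes to the top and bottom slices, i.e., that $\Qc_{G_1}$ and $\Qc_{G_2}$ each possess the integer decomposition property. But the ``known equivalence'' you then invoke --- that $\Qc_G$ has the integer decomposition property if and only if $G$ is perfect --- is not a theorem in the literature, and it is false. What is actually known (\cite{compressed}, used in the paper) is that the configuration $A_{S(G)}$ is \emph{compressed} if and only if $G$ is perfect; compressedness is strictly stronger than the integer decomposition property. For an odd hole the implication you need already fails: the stable set polytope of $C_5$ (cut out by $x\ge 0$, the edge inequalities $x_i+x_{i+1}\le 1$, and the single odd-cycle inequality $\sum_i x_i\le 2$) does possess the integer decomposition property although $C_5$ is not perfect, so your slice argument produces no contradiction there. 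The paper's proof shows why no slice argument can work: after invoking the strong perfect graph theorem, it exhibits for an odd hole of length $2\ell+1$ the point $\ab=\eb_1+\cdots+\eb_{2\ell+1}+2\eb_{d+1}\in 3\Omega(\Qc_{G_1},\Qc_{G_2})$, whose last coordinate is $2$ rather than $3$; any decomposition into three lattice points is forced to use the bottom vertex $\mathbf{0}_d\times\{-1\}$ together with two stable sets covering all of $[2\ell+1]$, which is impossible because the maximum stable set of the hole has only $\ell$ elements (and similarly for odd antiholes). The obstruction lives strictly below the top slice and genuinely involves the interaction of the two halves of $\Omega$.

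For (iii) $\Rightarrow$ (ii) your plan coincides with the paper's --- a squarefree reverse lexicographic initial ideal for the toric ideal of $\Omega(\Qc_{G_1},\Qc_{G_2})$, with normality and a reflexivity criterion (Lemma \ref{HMOS}) on top --- but you explicitly defer the construction of the ``bridge binomials'' and their verification as ``the principal obstacle,'' and that deferred step is precisely the content of Theorem \ref{squarefree}. There the bridge binomials are $x_iy_j-x_ky_\ell$ for the pairs with $\supp(\ab_i)\cap\supp(\bb_j)\ne\emptyset$, where $\ab_k=(\ab_i-\bb_j)^{(+)}$ and $\bb_\ell=(\ab_i-\bb_j)^{(-)}$ (these exist because $S(G)$ is a simplicial complex, so $A_{S(G_1)}$ and $A_{S(G_2)}$ are of harmony), together with $x_ny_m-z^2$; the verification is not a Buchberger $S$-pair computation but the standard-monomial criterion of \cite{OHrootsystem}. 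Note also that perfection enters through compressedness of $A_{S(G_i)}$ rather than through Chv\'atal's clique description; your reflexivity-via-facet-pairing sketch is a plausible alternative route (carried out for $G_1=G_2$ in \cite{Hansen2}) but is likewise unverified, whereas the paper obtains reflexivity for free from Lemma \ref{HMOS}. As written, this direction is an outline of the correct strategy rather than a proof.
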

A proof of part (b) will be achieved in Section $2$.
It would, of course, be of interest to find a complete characterization
for $\Omega(\Qc_{G_1},\Qc_{G_2})$ to be reflexive.
For a finite simple graph $G$ on $[d]$, $\Omega(\Qc_G,\Qc_G)$ is called the \textit{Hansen polytope} $\Hc(G)$ of $G$.
This polytope possesses nice properties (e.g., centrally symmetric and $2$-level) and is studied in \cite{Hansen2,Hansen1}.
Especially, in \cite{Hansen2}, it is shown that if $G$ is perfect, then $\Hc(G)$ is reflexive.
Theorem \ref{Gorenstein} (b) says that $G$ is perfect if and only if the Hansen polytope $\Hc(G)$ possesses
the integer decomposition property.

If $G_1$ and $G_2$ are not perfect, $\Gamma(\Qc_{G_1},\Qc_{G_2})$
may not possess the integer decomposition property (Examples \ref{e1} and \ref{e2}).  
Furthermore, if $G_1$ and $G_2$ are not perfect,  
$\Omega(\Qc_{G_1},\Qc_{G_2})$
may not be reflexive (Examples \ref{e2} and \ref{e3}).

We now turn to the discussion of Ehrhart $\delta$-polynomials of
$\Gamma(\Qc_{G_1},\Qc_{G_2})$ and $\Omega(\Qc_{G_1},\Qc_{G_2})$.
Let, in general, $\Pc \subset \RR^d$ be a lattice polytope of dimension $d$.
The {\em Ehrhart $\delta$-polynomial} of $\Pc$ is the polynomial
\[
\delta(\Pc, \lambda) = 
(1 - \lambda)^{d+1} \left[ \,
1 + \sum_{n=1}^{\infty} \mid n\Pc \cap \ZZ^d \mid \, \lambda^n \, \right]
\] 
in $\lambda$.
Each coefficient of $\delta(\Pc, \lambda)$ is a nonnegative integer 
and the degree of $\delta(\Pc, \lambda)$ is at most $d$.
In addition $\delta(\Pc, 1)$ coincides with the normalized volume of $\Pc$, denoted by ${\rm vol}(\Pc)$.
Refer the reader to \cite[Part II]{HibiRedBook} for the detailed information 
about Ehrhart $\delta$-polynomials. 
Moreover, in \cite{Athanasiadis}, the Ehrhart theory for stable set polytopes is studied.

The {\em suspension} of a finite simple graph $G$ on $[d]$ is the finite
simple graph $\widehat{G}$ on $[d+1]$ with 
$E(\widehat{G}) = E(G) \cup \{ \{i, d+1\} \, : \, i \in [d] \}$.
We obtain the following theorem.

\begin{Theorem}
\label{delta}    
Let $G_1$ and $G_2$ be finite perfect simple graphs on $[d]$.
Then one has
\[
\delta(\Omega(\Qc_{G_1},\Qc_{G_2}), \lambda) 
= \delta(\Gamma(\Qc_{\widehat{G_1}}, \Qc_{\widehat{G_2}}), \lambda)
= (1 + \lambda)\delta(\Gamma(\Qc_{G_1},\Qc_{G_2}), \lambda).
\]
Thus in particular 
\[
{\rm vol}(\Omega(\Qc_{G_1},\Qc_{G_2})) 
= {\rm vol}(\Gamma(\Qc_{\widehat{G_1}}, \Qc_{\widehat{G_2}})) 
= 2 \cdot {\rm vol}(\Gamma(\Qc_{{G_1}},{\Qc_{G_2}})).
\] 
\end{Theorem}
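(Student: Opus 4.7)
For brevity write $\Omega := \Omega(\Qc_{G_1},\Qc_{G_2})$, $\Gamma := \Gamma(\Qc_{G_1},\Qc_{G_2})$, and $\Gamma^{\vee} := \Gamma(\Qc_{\widehat{G_1}},\Qc_{\widehat{G_2}})$; set $L_\Pc(n) := |n\Pc \cap \ZZ^{\dim \Pc}|$. My plan hinges on the auxiliary quantity
\[
N(k) := \sum_{\substack{a,b \in \ZZ_{\geq 0} \\ a+b = k}} \bigl|(a\Qc_{G_1} - b\Qc_{G_2}) \cap \ZZ^d\bigr| \qquad (k \geq 0), \qquad N(-1) := 0,
\]
together with two level-wise identities
\[
L_\Omega(n) = N(n) + N(n-1), \qquad L_\Gamma(n) = N(n) - N(n-1).
\]
Combined, these yield $\sum_n L_\Omega(n)\lambda^n = (1+\lambda)\sum_n N(n)\lambda^n$ and $\sum_n L_\Gamma(n)\lambda^n = (1-\lambda)\sum_n N(n)\lambda^n$, and multiplication by $(1-\lambda)^{d+2}$ produces $\delta(\Omega,\lambda) = (1+\lambda)\delta(\Gamma,\lambda)$. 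For the remaining equality, the stable sets of $\widehat{G}$ are precisely those of $G$ together with $\{d+1\}$, so $\Qc_{\widehat{G}}$ is the pyramid over $\Qc_G \times \{0\}$ with apex $\eb_{d+1}$ and $\Gamma^{\vee}$ is the bipyramid over $\Gamma$ with apices $\pm\eb_{d+1}$; slicing at integer height gives $L_{\Gamma^{\vee}}(n) = L_\Gamma(n) + 2\sum_{k=0}^{n-1}L_\Gamma(k)$, which is exactly the factor $(1+\lambda)$ after clearing $(1-\lambda)^{d+2}$.

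To prove $L_\Omega(n) = N(n) + N(n-1)$, I would invoke the integer decomposition property of $\Omega$ from Theorem \ref{Gorenstein}(b): every lattice point of $n\Omega$ at height $s$ decomposes as a sum of $a$ vectors $(\rho(W), 1)$ with $W \in S(G_1)$, $c$ vectors $(-\rho(W'), -1)$ with $W' \in S(G_2)$, and $n-a-c$ copies of the origin, where $a - c = s$. By the integer decomposition property of $\Qc_{G_i}$ (valid for perfect $G_i$) the projection onto the first $d$ coordinates lies in $(a\Qc_{G_1} \cap \ZZ^d) - (c\Qc_{G_2} \cap \ZZ^d)$; the monotonicity $r\Qc_{G_i} \subseteq r'\Qc_{G_i}$ for $r \leq r'$ (which follows from $\mathbf{0} \in \Qc_{G_i}$) collapses the union over admissible $(a,c)$ to a single maximal pair $(a^*,c^*)$ with $a^* + c^* = n$ or $n-1$ according to the parity of $n-s$. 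Summing over $s$ yields the claim, after identifying the set of admissible first coordinates with $(a^*\Qc_{G_1} - c^*\Qc_{G_2}) \cap \ZZ^d$ via the ``integer Minkowski'' identity $(a\Qc_{G_1} - c\Qc_{G_2}) \cap \ZZ^d = (a\Qc_{G_1} \cap \ZZ^d) - (c\Qc_{G_2} \cap \ZZ^d)$ for perfect graphs.

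The second formula $L_\Gamma(n) = N(n) - N(n-1)$ follows from $n\,\Gamma = \bigcup_{a+b=n}(a\Qc_{G_1} - b\Qc_{G_2})$ by inclusion--exclusion, using the intersection identity
\[
(a\Qc_{G_1} - b\Qc_{G_2}) \cap (a'\Qc_{G_1} - b'\Qc_{G_2}) = \min(a,a')\Qc_{G_1} - \min(b,b')\Qc_{G_2}.
\]
For a non-empty $S \subseteq \{0,1,\ldots,n\}$ the intersection $\bigcap_{a \in S}(a\Qc_{G_1} - (n-a)\Qc_{G_2})$ equals $(\min S)\Qc_{G_1} - (n - \max S)\Qc_{G_2}$; gathering subsets by $(\min S, \max S)$ shows that only those with $\max S - \min S \leq 1$ survive the alternating sum, and the surviving terms pair precisely into $N(n) - N(n-1)$. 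The principal obstacle is the rigorous verification of the intersection identity and the integer Minkowski identity for perfect graphs; both are expected to follow from the total dual integrality of the clique-constraint description $\Qc_G = \{\xb \geq \mathbf{0} : \sum_{i \in C} x_i \leq 1 \text{ for every clique } C \text{ of } G\}$ for perfect $G$, which tightly controls how lattice points of sums and Minkowski differences of stable set polytopes reduce to those of the individual scaled pieces.
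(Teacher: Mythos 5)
Your route is genuinely different from the paper's: the paper never counts lattice points directly, but instead shows that the toric ideals of $\Omega(\Qc_{G_1},\Qc_{G_2})$ and of the bipyramid $\Rc=\conv\{\Gamma(\Qc_{G_1},\Qc_{G_2})\times\{0\},\pm\eb_{d+1}\}=\Gamma(\Qc_{\widehat{G_1}},\Qc_{\widehat{G_2}})$ have the \emph{same} squarefree reverse lexicographic initial ideal $(\Mc)$ (via Theorem \ref{squarefree} and the harmony machinery of \cite{harmony}), so the two toric rings have equal Hilbert functions, and then quotes \cite{BeckJayawantMcAllister} for the factor $(1+\lambda)$. Your bipyramid slicing, the identification $\Gamma(\Qc_{\widehat{G_1}},\Qc_{\widehat{G_2}})=\conv\{\Gamma(\Qc_{G_1},\Qc_{G_2})\times\{0\},\pm\eb_{d+1}\}$, and the generating-function algebra turning $L_\Omega(n)=N(n)+N(n-1)$ and $L_\Gamma(n)=N(n)-N(n-1)$ into the stated $\delta$-identities are all correct in outline.

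However, the two identities your count rests on are exactly where the work lies, and you leave both unproved. The ``integer Minkowski'' identity $(a\Qc_{G_1}-c\Qc_{G_2})\cap\ZZ^d=(a\Qc_{G_1}\cap\ZZ^d)-(c\Qc_{G_2}\cap\ZZ^d)$ is salvageable: lift a lattice point $u-v$ to $(u-v,a-c)\in(a+c)\Omega$ and apply the integer decomposition property of $\Omega$ from Theorem \ref{Gorenstein}(b); you should say this rather than gesture at total dual integrality. The intersection identity $(a\Qc_{G_1}-b\Qc_{G_2})\cap(a'\Qc_{G_1}-b'\Qc_{G_2})=\min(a,a')\Qc_{G_1}-\min(b,b')\Qc_{G_2}$ is a genuine gap: as a statement about \emph{real} points it is of the same nature as your claim $n\Gamma=\bigcup_{a+b=n}(a\Qc_{G_1}-b\Qc_{G_2})$, which is already false for real points (for $G_1=G_2$ the edgeless graph on two vertices, $(3/2,-1/2)\in 2\Gamma$ lies in none of $[0,2]^2$, $[-1,1]^2$, $[-2,0]^2$); so both must be restated for lattice points only, the first rescued by Theorem \ref{Gorenstein}(a), and the lattice-point version of the intersection identity then amounts to a nontrivial rearrangement statement about sums and differences of stable sets (when supports of the positive and negative parts overlap), which is essentially the content of the ``harmony'' condition that the paper's Gr\"obner argument is built to handle. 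Total dual integrality of the clique description is not a proof of it, and until that identity is established your inclusion--exclusion for $L_\Gamma(n)$, and hence the chain of equalities, does not close.
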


A proof of Theorem \ref{delta} will be given in Section $3$.  
Even though the Ehrhart $\delta$-polynomial of
$\Omega(\Qc_{G_1},\Qc_{G_2})$ coincides with that of  
$\Gamma(\Qc_{\widehat{G_1}},\Qc_{ \widehat{G_2}})$,
$\Omega(\Qc_{G_1},\Qc_{G_2})$ may not be unimodularly equivalent to 
$\Gamma(\Qc_{\widehat{G_1}}, \Qc_{ \widehat{G_2}})$ (Example \ref{e4}). 

\section{Squarefree Gr\"{o}bner basis}
In this section, we prove Theorem \ref{Gorenstein} by using the theory of Gr\"obner bases and toric ideals.
At first, we recall basic materials and notation on toric ideals.
Let $K[{\bf t^{\pm 1}}, s] = 
K[t_{1}^{\pm 1}, \ldots, t_{d}^{\pm 1}, s]$
denote the Laurent polynomial ring in $d + 1$ variables over a field $K$.
For  an integer vector ${\bf a}= [a_{1}, \ldots, a_{d}] ^{\top} \in \ZZ^d$,
the transpose of $[a_{1}, \ldots, a_{d}]$,
${\bf t}^{\ab}s$ is the Laurent monomial
$t_{1}^{a_{1}} \cdots t_{d}^{a_{d}}s \in K[{\bf t^{\pm1}}, s]$. 
%In particular ${\bf t}^{\bf 0}s = s$.
Given an integer $d \times n$ matrix 
$A = [{\bf a}_{1}, \ldots, {\bf a}_{n}]$, where
${\bf a}_{j} = [a_{1j}, \ldots, a_{dj}]^{\top}$
 is
the $j$th column of $A$,
then we define 
the {\em toric ring} $K[A]$ of $A$ as follows:
\[
K[A]=K[{\bf t}^{\ab_1}s,\ldots,{\bf t}^{\ab_n}s] \subset K[{\bf t^{\pm 1}}, s].
\]
Let $K[{\bf x}] = K[x_{1}, \ldots, x_{n}]$ be the polynomial ring
in $n$ variables over $K$ and define the surjective ring homomorphism
$\pi : K[{\bf x}] \to K[A]$ by setting $\pi(x_{j}) = {\bf t}^{{\bf a}_{j}}s$
for $j = 1, \ldots, n$.  The {\em toric ideal} of $A$ is the kernel $I_{A}$
of $\pi$.  %Every toric ideal is generated by binomials.  
Let $<$ be a monomial order on $K[{\bf x}]$ and 
${\rm in}_{<}(I_{A})$ the initial ideal of $I_{A}$ with respect to $<$.
The initial ideal ${\rm in}_{<}(I_{A})$ is called {\em squarefree} if 
${\rm in}_{<}(I_{A})$ is generated by squarefree monomials.
The {\em reverse lexicographic order} on $K[\xb]$ induced by the ordering $x_n <_{\text{rev}}  \cdots <_{\text{rev}} x_1$ is the total order $<_{\text{rev}}$ on the set of monomials in the variables $x_1, x_2,\ldots,x_n$ by setting
$x_1^{a_1}x_2^{a_2}\cdots x_n^{a_n} <_{\text{rev}} x_1^{b_1}x_2^{b_2}\cdots x_n^{b_n}$  if either (i) $\sum_{i=1}^{n}a_i < \sum_{i=1}^{n} b_i$, or (ii) $\sum_{i=1}^{n}a_i = \sum_{i=1}^{n} b_i$ and the rightmost nonzero component of the vector $(b_1-a_1,b_2-a_2,\ldots,b_n-a_n)$ is negative.
A reverse lexicographic order is also called a {\em graded reverse lexicographic order}.
Please refer  \cite[Chapters 1 and 5]{dojoEN} 
for more details on Gr\"obner bases and toric ideals. 

Let $\ZZ_{\geq 0}^{d}$ denote the set of integer column vectors
$[a_{1}, \ldots, a_{d}]^{\top}$ with each $a_{i} \geq 0$,
and let $\ZZ_{\geq 0}^{d \times n}$ denote the set of $d \times n$ 
integer matrices $(a_{ij})_{1 \leq i \leq d \atop 1 \leq j \leq n}$
with each $a_{ij} \geq 0$.
In \cite{harmony}, the concept that $A \in \ZZ_{\geq 0}^{d \times n}$ and $B \in \ZZ_{\geq 0}^{d \times m}$ are of \textit{of harmony} is introduced.
For an integer vector ${\bf a} = [a_{1}, \ldots, a_{d}]^{\top} \in \ZZ^{d}$, 
let ${\bf a}^{(+)} = [a_{1}^{(+)}, \ldots, a_{d}^{(+)}]^{\top}, 
{\bf a}^{(-)} = [a_{1}^{(-)}, \ldots, a_{d}^{(-)}]^{\top} \in \ZZ_{\geq 0}^{d}$ 
where $a_{i}^{(+)} = \max \{0, a_{i} \}$ and  $a_{i}^{(-)} = \max\{0, -a_{i}\}$.
Note that ${\bf a} = {\bf a}^{(+)} - {\bf a}^{(-)}$ holds in general. 
%there exist unique column vectors ${\bf a}^{(+)}$ and ${\bf a}^{(-)}$ belonging to 
%$\ZZ_{\geq 0}^{d}$ with ${\bf a} = {\bf a}^{(+)} - {\bf a}^{(-)}$. 
Given $A \in \ZZ_{\geq 0}^{d \times n}$
and $B \in \ZZ_{\geq 0}^{d \times m}$ such that the zero vector
${\bf 0}_{d} = [0, \ldots, 0]^{\top} \in \ZZ^{d}$ is a column in each of $A$ and $B$, we say that $A$ and $B$ 
are {\em of harmony} if the following condition is satisfied:
Let ${\bf a}$ be a column of $A$ 
and ${\bf b}$ that of $B$.  
Let ${\bf c} = {\bf a} - {\bf b} \in \ZZ^{d}$. 
If ${\bf c} = {\bf c}^{(+)} - {\bf c}^{(-)}$,
then ${\bf c}^{(+)}$ is a column vector of $A$
and ${\bf c}^{(-)}$ is a column vector of $B$.

Now we prove the following theorem.
\begin{Theorem}
	\label{squarefree}
	Let $A = [{\bf a}_{1}, \ldots, {\bf a}_{n}] \in \ZZ_{\geq 0}^{d \times n}$ 
	and 
	$B = [{\bf b}_{1}, \ldots, {\bf b}_{m}]\in \ZZ_{\geq 0}^{d \times m}$, where $\ab_{n}=\bb_{m}={\bf 0}_d \in \ZZ^d$, 
	be of harmony.
	Let $K[ {\bf x}] = K[x_{1}, \ldots, x_{n}]$
	and $K[ {\bf y}] = K[ y_{1}, \ldots, y_{m}]$
	be the polynomial rings over a field $K$. 
	Suppose that 
	%the toric ideals $I_{A^{\sharp}}$ of $A^{\sharp}$ and
	%$I_{B^{\sharp}}$ of $B^{\sharp}$ possesses a squarefree initial ideal 
	${\rm in}_{<_A}(I_{A})
	\subset K[{\bf x}]$ and 
	${\rm in}_{<_B}(I_{B}) \subset K[ {\bf y}]$ are
	squarefree with respect to reverse lexicographic orders 
	$<_A$ on $K[{\bf x}]$
	and $<_B$ on $K[{\bf y}]$ respectively
	satisfying the condition that
	\begin{itemize}
		\item
		$x_i  <_A  x_j$ \,if\, %$\pi(x_i)$ \,divides\, $\pi(x_j)$, i.e., 
		for each $1 \leq k \leq d$ 
		$a_{ki} \leq a_{kj}$. %where $\ab_i=[a_{i1},\ldots,a_{id}]^{\top}$ and $\ab_j=[a_{j1},\ldots,a_{jd}]^{\top}$.
		\item $x_{n}$ is the smallest variable with respect to $<_A$.
		\item $y_{m}$ is the smallest variable with respect to $<_B$.
	\end{itemize}
	Let $[-B, A]^{*}$ denote the $(d+1) \times (n + m+1)$
	integer matrix 
	$$\begin{bmatrix}
	-\bb_1 & \cdots & -\bb_m  & \ab_1 & \cdots & \ab_n & \mathbf{0}_d \\
	-1 & \cdots & -1 & 1 & \cdots & 1& 0
	\end{bmatrix}.$$
	Then the toric ideal $I_{[-B, A]^{*}}$ of
	$[-B, A]^{*}$ possesses a squarefree initial ideal 
	with respect to a reverse lexicographic order
	whose smallest variable corresponds to the column
	${\bf 0}_{d+1} \in \ZZ^{d+1}$ of $[-B, A]^{*}$.
\end{Theorem}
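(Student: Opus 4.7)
The plan is to exhibit an explicit squarefree Gr\"obner basis of $I_{[-B, A]^{*}}$ with respect to the reverse lexicographic order $<$ on $K[\xb, \yb, z]$ induced by the variable ordering
\[
x_1 > x_2 > \cdots > x_n > y_1 > y_2 > \cdots > y_m > z,
\]
where $z$ is the variable attached to the zero column $\mathbf{0}_{d+1}$ of $[-B, A]^{*}$, and the internal orderings of the $x_i$'s and $y_j$'s refine $<_A$ and $<_B$ respectively. This makes $z$ the smallest variable and ensures that $<$ restricts to $<_A$ on $K[\xb]$ and to $<_B$ on $K[\yb]$.

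Take $\Gc$ to be the union of $\Gc_A$, $\Gc_B$, the binomial $g_{nm} = x_n y_m - z^2$, and, for each $(i, j) \in [n] \times [m]$ with $\supp(\ab_i) \cap \supp(\bb_j) \neq \emptyset$, the binomial $g_{ij} = x_i y_j - x_{\sigma(i,j)} y_{\tau(i,j)}$, where by the harmony hypothesis $\sigma(i,j), \tau(i,j)$ are chosen with $\ab_{\sigma(i,j)} = (\ab_i - \bb_j)^{(+)}$ and $\bb_{\tau(i,j)} = (\ab_i - \bb_j)^{(-)}$. Each such binomial lies in $I_{[-B, A]^{*}}$ by direct computation under the toric surjection. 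The initial monomials from $\Gc_A \cup \Gc_B$ are squarefree by hypothesis; for each mixed $g_{ij}$ the componentwise inequalities $\ab_{\sigma(i,j)} \le \ab_i$ and $\bb_{\tau(i,j)} \le \bb_j$ combined with the monotonicity hypothesis on $<_A, <_B$ place $x_{\sigma(i,j)}, y_{\tau(i,j)}$ to the right of $x_i, y_j$ in $<$, and inspecting the rightmost nonzero entry of the exponent difference yields $\ini_<(g_{ij}) = x_i y_j$; for $g_{nm}$ the rightmost nonzero entry of the exponent difference sits at $z$ with value $+2$, so $\ini_<(g_{nm}) = x_n y_m$. All these initial monomials are squarefree.

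The main obstacle is verifying that $\Gc$ is actually a Gr\"obner basis. I would apply the standard-monomial criterion. The $\ini_<(\Gc)$-standard monomials $M = \xb^{\ub} \yb^{\vb} z^a$ are exactly those satisfying (i) $\xb^{\ub}$ is standard for $\Gc_A$ and $\yb^{\vb}$ is standard for $\Gc_B$; (ii) $\supp(\ab_i) \cap \supp(\bb_j) = \emptyset$ whenever $u_i v_j > 0$; and (iii) $u_n v_m = 0$. Since $<$ is a well-order, reduction modulo $\Gc$ terminates, so every monomial of $K[\xb, \yb, z]$ reduces to such a standard monomial, establishing spanning. For injectivity of standard monomials in $K[[-B, A]^{*}]$, fix $M$ standard with image $\tb^{\db} t_{d+1}^e s^k$ under the toric surjection: condition (ii) together with $\supp(\ab_i) \subseteq \supp(A\ub)$ for $u_i > 0$ forces $\supp(A\ub) \cap \supp(B\vb) = \emptyset$, so $A\ub = \db^{(+)}$ and $B\vb = \db^{(-)}$ are determined by $\db$. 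Since the smallest variables $x_n$ and $y_m$ are regular on $K[A]$ and $K[B]$, the leading monomials of $\Gc_A, \Gc_B$ do not involve $x_n$ or $y_m$; let $\ub^{(0)}, \vb^{(0)}$ be the unique standard preimages of $\db^{(+)}, \db^{(-)}$ with $u_n^{(0)} = v_m^{(0)} = 0$, so that $\ub = \ub^{(0)} + u_n \mathbf{e}_n$ and $\vb = \vb^{(0)} + v_m \mathbf{e}_m$. The equation $u_n - v_m = e - (|\ub^{(0)}| - |\vb^{(0)}|)$ together with $u_n, v_m \ge 0$ and $u_n v_m = 0$ uniquely determines $(u_n, v_m)$, and then $a = k - |\ub| - |\vb|$ is forced. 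Hence distinct standard monomials have distinct images, so $\Gc$ is a Gr\"obner basis and $\ini_<(I_{[-B, A]^{*}})$ is squarefree, with the zero column corresponding to the smallest variable $z$ as required.
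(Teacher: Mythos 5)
Your overall strategy coincides with the paper's: the same candidate set $\Gc$ (the mixed binomials $x_iy_j-x_{\sigma}y_{\tau}$ for overlapping supports, the binomial $x_ny_m-z^2$, and Gr\"obner bases of $I_A$ and $I_B$), and the same criterion from \cite[(0.1)]{OHrootsystem} that $\Gc$ is a Gr\"obner basis provided distinct standard monomials have distinct images. Your verification of injectivity---recovering the $\xb$-part, the $\yb$-part, and the exponents of $x_n$, $y_m$, $z$ of a standard monomial from its image, using disjointness of supports and the fact that $x_n$, $y_m$ do not occur in the reverse lexicographic initial ideals of $I_A$, $I_B$---is a sound and in fact cleaner reorganization of the paper's case analysis on an irreducible binomial $u-v$ with both terms standard.

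There is, however, one genuine gap, and it comes from your choice of variable ordering. You put the blocks in the order $x_1>\cdots>x_n>y_1>\cdots>y_m>z$. In a reverse lexicographic order two monomials of the same degree are compared at the \emph{smallest} variable in which they differ, and for the mixed binomial $g_{ij}=x_iy_j-x_{\sigma}y_{\tau}$ that variable lies in the $\yb$-block, since in your order every $y$ is below every $x$. Hence $\ini_<(g_{ij})=x_iy_j$ holds if and only if $y_{\tau}<_B y_{j}$. You justify this by appealing to ``the monotonicity hypothesis on $<_A,<_B$,'' but the theorem imposes the monotonicity condition (componentwise domination of columns forces the order of the variables) \emph{only} on $<_A$; for $<_B$ the sole assumption is that $y_m$ is smallest. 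So although $\bb_{\tau}=(\ab_i-\bb_j)^{(-)}\lneq\bb_j$ componentwise, nothing in the hypotheses forces $y_{\tau}<_B y_j$; if $y_{\tau}>_B y_j$ then $\ini_<(g_{ij})=x_{\sigma}y_{\tau}$, which invalidates your condition (ii) describing the standard monomials and with it the disjointness $\supp(A{\bf u})\cap\supp(B{\bf v})=\emptyset$ on which the whole injectivity argument rests. The repair is exactly the paper's ordering $y_1>\cdots>y_m>x_1>\cdots>x_n>z$: then the comparison of $x_iy_j$ with $x_{\sigma}y_{\tau}$ is decided in the $\xb$-block, where the assumed monotonicity of $<_A$ together with $\ab_{\sigma}=(\ab_i-\bb_j)^{(+)}\lneq\ab_i$ gives $x_{\sigma}<_A x_i$ and hence $\ini_<(g_{ij})=x_iy_j$, independently of how $y_j$ and $y_{\tau}$ compare. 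With that single change the rest of your argument goes through.
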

\begin{proof}
	%Let $K[[-B, A]^{*}] \subset K[ t_{1}^{\pm 1}, \ldots, t_{d+1}^{\pm 1}, s]$
	%be the toric ring of $[-B, A]^{*}$
	Let $I_{[-B, A]^{*}} \subset K[\xb, \yb, z]
	= K[x_1, \ldots, x_n, y_1, \ldots, y_m, z]$ 
	be the toric ideal of $[-B, A]^{*}$
	defined by the kernel of 
	% the surjective ring homomorphism
	$$\pi^{*} : K[\xb, \yb, z] \to K[[-B, A]^{*}]\subset K[ t_{1}^{\pm 1}, \ldots, t_{d+1}^{\pm 1}, s]$$
	with
	% defined by setting 
	$\pi^{*}(z) = s$,
	$\pi^{*}(x_i) = {\bf t}^{{\bf a}_{i}}t_{d+1}s$
	for $i = 1, \ldots, n$ and
	$\pi^{*}(y_j) = {\bf t}^{ - {\bf b}_{j}}t_{d+1}^{-1}s$
	for $j = 1, \ldots, m$.
	Assume that the reverse lexicographic orders
	$<_A$ and $<_B$ are induced by the orderings
	$x_n <_A \cdots <_A x_1$
	and
	$y_m <_B \cdots <_B y_1$.
	Let $<_{\rm rev}$ be the reverse lexicographic order on 
	$K[\xb, \yb, z]$ induced by the ordering
	$$ z <_{\rm rev} x_n <_{\rm rev} \cdots <_{\rm rev} x_1 <_{\rm rev} y_m <_{\rm rev} \cdots <_{\rm rev} y_1.$$
	
	In general, for an integer vector $\ab = [a_{1}, \ldots, a_{d}]^{\top} \in \ZZ^{d}$, 
	we let ${\rm supp}(\ab)=\{i :  1\leq i \leq d, a_i \neq 0\}$.
	Set the following:
$$			{\mathcal E}
			=
			\{ \, (i,j) \, : \, 1 \leq i \leq n, \, 1 \leq j \leq m, \, 
			{\rm supp} (\ab_i) \cap {\rm supp} (\bb_j) \neq 
			\emptyset \, \}. %&\cup \{ \, (i,j) \, : \, 1 \leq i \leq n, \, 1 \leq j \leq m, \, 
	$$
	If ${\bf c} = \ab_i - \bb_j$
	with $(i,j) \in {\mathcal E}$,
	% and ${\rm supp} (\ab_i) \neq \emptyset$.
	then it follows that ${\bf c}^{(+)} \neq \ab_i$
	and
	${\bf c}^{(-)} \neq \bb_j$.
	Since $A$ and $B$ are of harmony,  we know that
	${\bf c}^{(+)}$ is a column of $A$
	and ${\bf c}^{(-)}$ is a column of $B$.
	It follows that $f = x_i y_j -x_ky_{\ell}$ ($\neq 0$)
	belongs to $I_{[-B, A]^{*}}$, where
 ${\bf c}^{(+)} = \ab_k$
 and
	${\bf c}^{(-)} = \bb_\ell$. 
	%z y_\ell &  \mbox{if } {\bf c}^{(+)} = {\bf 0} \mbox{ and }
	%{\bf c}^{(-)} = \bb_\ell, \\
	%x_k z&  \mbox{if } {\bf c}^{(+)} = \ab_k \mbox{ and }
	%{\bf c}^{(-)} = {\bf 0}, \\
	%If $u=z^2$, then ${\rm in}_{<_{\rm rev}}(f) = x_iy_j$,
	%where ${\rm in}_{<_{\rm rev}}(f)$ is the initial monomial of 
	%$f \in K[{\bf x}, {\bf y}, z]$.
	%If $u =x_ky_{\ell}$, 
	Then since %$\pi(x_k)$ divides $\pi(x_i)$,
	for each $1 \leq  c \leq d$, 
	$a_{ck} \leq a_{ci}$, %, where $\ab_i=[a_{i1},\ldots,a_{id}]^{\top}$ and $\ab_j=[a_{j1},\ldots,a_{jd}]^{\top}$.
	one has $x_k <_A x_i$ and ${\rm in}_{<_{\rm rev}}(f) = x_iy_j$. 
	Hence
	\[
	\{ \, x_i y_j \, : \, (i,j) \in {\mathcal E} \, \}
	\subset
	{\rm in}_{<_{\rm rev}}(I_{[-B, A]^{*}}).
	\]
	Moreover, it follows that $x_ny_m-z^2 \in I_{[-B, A]^{*}}$ and $x_ny_m \in {\rm in}_{<_{\rm rev}}
	( I_{[-B, A]^{*}})$. 
	We set
	\[
	{\mathcal M}=
	\{x_{n}y_{m}\}
	\cup	
	\{ \, x_i y_j \, : \, (i,j) \in {\mathcal E} \, \}
	\cup 
	{\mathcal M}_A
	\cup 
	{\mathcal M}_B \, \, \,  
	( \, \subset 
	{\rm in}_{<_{\rm rev}}( I_{[-B, A]^{*}}) \, ),
	\]
	where
	${\mathcal M}_{A}$ (resp. ${\mathcal M}_B$) is
	the minimal set of squarefree monomial generators of % the initial ideal 
	${\rm in}_{<_A} (I_{A})$
	(resp. ${\rm in}_{<_B} (I_{B})$).
	Let $\Gc$ be a finite set of binomials belonging to $I_{[-B, A]^{*}}$ with ${\mathcal M} = \{ {\rm in}_{<_{\rm rev}}(f) : f \in {\mathcal G}\}$.
	
	Now, we prove that $\Gc$ is a Gr\"obner base of ${\rm in}_{<_{\rm rev}}( I_{[-B, A]^{*}})$
	with respect to $<_{\rm rev}$.
%	Suppose that % the initial ideal 
%	${\rm in}_{<_{\rm rev}}
%	( I_{[-B, A]^{*}})$ cannot be generated by the set
%	of squarefree monomials
	By the following fact (\cite[(0.1), p.~1914]{OHrootsystem}) on Gr\"obner bases,
	we must prove the following assertion:
	%A finite set ${\mathcal G}$ of $I$ is a Gr\"obner basis with respect to $<$
	%if and only if 
	If $u$ and $v$ are monomials belonging to $K[\xb,\yb,z]$ with $u \neq v$ such that
	%$\pi(u) \neq \pi(v)$ for any monomials
	$u \notin ( {\rm in}_<(g) : g \in {\mathcal G})$ and $v\notin( {\rm in}_<(g) : g \in {\mathcal G})$ 
	, then $\pi^*(u) \neq \pi^*(v)$.
	
	%Since $\Gc$ with ${\mathcal M} = \{ {\rm in}_<(f) : f \in {\mathcal G}\}$
	%is not a Gr\"obner basis, it follows that
	%there exists a nonzero irreducible binomial 
	Suppose that there exists a nonzero irreducible binomial $g = u - v$ be  belonging to $I_{[-B, A]^{*}}$ such that 
	%$\pi(u) \neq \pi(v)$ for any monomials
	$u \notin ( {\rm in}_<(g) : g \in {\mathcal G})$ and $v\notin( {\rm in}_<(g) : g \in {\mathcal G})$.
	Write
	\[
	u = \left(\,
	\prod_{p \in P} x_{p}^{i_p} 
	\right)
	\left(\, 
	\prod_{q \in Q} y_q^{j_q} 
	\right),
	\, \, \, \, \, \, 
	v =
	z^{\alpha} 
	\left(\,
	\prod_{p' \in P'} x_{p'}^{i'_{p'}}
	\right) 
	\left(\, 
	\prod_{q' \in Q} y_{q'}^{j'_{q'}} 
	\right),
	\]
	where $P$ and $P'$ are subsets of $[n]$, 
    $Q$ and $Q'$ are subsets of $[m]$, 
	$\alpha$ is a nonnegative integer, 
	and  each of $i_p, j_q, i'_{p'}, j'_{q'}$ is a positive integer.
	Since $g = u -v$ is irreducible, one has
	$P\cap P'  = Q \cap Q' = \emptyset$.
	Furthermore, by the fact that each of $x_i y_j$ with $(i,j) \in {\mathcal E}$ 
	can divide neither $u$ nor $v$, it follows that
	$$
	\left(\,
	\bigcup_{p \in P} {\rm supp} (\ab_p) 
	\right)
	\cap 
	\left(\,
	\bigcup_{q \in Q} {\rm supp} (\bb_q) 
	\right)
	=
	\left(\,
	\bigcup_{p' \in P'} {\rm supp} (\ab_{p'}) 
	\right)
	\cap 
	\left(\,
	\bigcup_{q' \in Q'} {\rm supp} (\bb_{q'}) 
	\right)
	=
	\emptyset.
	$$
	Hence, since $\pi^{*} (u) = \pi^{*} (v)$,
	 it follows that
	$$
	\sum_{p \in P} i_p \ab_p = \sum_{p' \in P'} i'_{p'} \ab_{p'},
	\, \, \, \, \,  
	\sum_{q \in Q} j_q \bb_q = \sum_{q' \in Q'} j'_{q'} \bb_{q'}.
	$$
	Let $\xi = \sum_{p \in P} i_p$,
	$\xi' = \sum_{p' \in P'} i'_{p'}$,
	$\nu = \sum_{q \in Q} j_q$,
	and $\nu' = \sum_{q' \in Q'} j'_{q'}$. 
	Then
	$\xi  + \nu =  \xi' + \nu'+\alpha$.
	Since $\alpha \geq 0$, it follows that
	either $\xi \geq \xi'$ or $\nu \geq \nu'$.
	Assume that $\xi > \xi'$.
	Then
	\[
	h = \prod_{p \in P} x_{p}^{i_p}
	-
	x_{n}^{\xi - \xi'} \left(\, \prod_{p' \in P'} x_{p'}^{i'_{p'}} \right)
	\]
	belongs to  $I_{A}$ and $I_{[-B, A]^{*}}$.
	If $h \neq 0$, then ${\rm in}_{<_{A}}(h) ={\rm in}_{<_{\rm rev}}(h) = \prod_{p \in P} x_{p}^{i_p}$
	divides $u$, a contradiction.  
	Hence $P=\{n\}$ and $Q=\emptyset$.
	If $\xi = \xi'$, then the binomial
		$$h_{0} = \prod_{p \in P} x_{p}^{i_p} 
		-
		\prod_{p' \in P'} x_{p'}^{i'_{p'}}$$
	belongs to $I_{A}$ and $I_{[-B, A]^{*}}$.  
	Moreover, if $h_{0} \neq 0$, then 
	either
	$\prod_{p \in P} x_{p}^{i_p}$ 
	or
	$\prod_{p' \in P'} x_{p'}^{i'_{p'}}$
	must belong to 	${\rm in}_{<_A} (I_{A})$ and ${\rm in}_{<_{\rm rev}}(I_{[-B, A]^{*}})$.
	This contradicts the fact that
	each of $u$ and $v$
	can be divided by none of the monomials belonging to 
	${\mathcal M}$.  Hence $h_{0} = 0$
	and $P = P' = \emptyset$.
	Similarly, $Q=\{m\}$ and $Q'=\emptyset$, or $Q = Q' = \emptyset$. 
	Hence we know that 
	$g=x_{n}^{k}y_{m}^{\ell}-z^{\alpha}$, where $k$ and $\ell$ are nonnegative integers.
	Since $u$ cannot be divided by $x_{n}y_{m}$,
	it follows that $g=0$, a contradiction.
Therefore, $\Gc$ is a Gr\"obner base of ${\rm in}_{<_{\rm rev}}( I_{[-B, A]^{*}})$
with respect to $<_{\rm rev}$.
\end{proof}

Now, we recall the following lemma.
\begin{Lemma}[{\cite[Lemma 1.1]{HMOS}}]
	\label{HMOS}
	Let $\Pc \subset \RR^d$ be a lattice polytope of dimension $d$  such that the origin of $\RR^d$ is contained 
	in its interior and $\Pc \cap \ZZ^d=\{\ab_1,\ldots,\ab_n \}$.
	Suppose that any integer point in $\ZZ^{d+1}$ is a linear integer combination of the integer points in $\Pc \times \{1\}$ and there exists an ordering of the variables $x_{i_1} < \cdots < x_{i_n}$ for which $\ab_{i_1}= \mathbf{0}_d$ such that the initial ideal $\textnormal{in}_{<}(I_{A})$ of the toric ideal $I_{A}$ with respect to the reverse lexicographic order $<$ on the polynomial ring $K[x_1,\ldots,x_n]$
	induced by the ordering is squarefree,
	where $A=[\ab_1,\ldots,\ab_n]$.
	Then $\Pc$ is a reflexive polytope which possesses the integer decomposition property.
\end{Lemma}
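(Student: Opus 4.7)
The plan is to derive both conclusions from the correspondence between squarefree initial ideals of toric ideals and regular unimodular triangulations. Setting $A = [\ab_1, \ldots, \ab_n]$, by Sturmfels' theory the squarefreeness of $\textnormal{in}_<(I_A)$ yields a regular unimodular triangulation $\Delta$ of the point configuration $\Pc \cap \ZZ^d$; every maximal simplex of $\Delta$ has vertices among the $\ab_j$ and normalized volume one. The reverse lexicographic hypothesis with $x_{i_1}$ (corresponding to $\ab_{i_1} = \mathbf{0}_d$) as the smallest variable will force $\Delta$ to be a \emph{star triangulation at the origin}: every maximal simplex of $\Delta$ contains $\mathbf{0}_d$ as a vertex.

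For the integer decomposition property, I would use the standard fact that a squarefree initial ideal forces the semigroup ring $K[A]$ to be normal. Combined with the hypothesis that $\{(\ab_i, 1)\}_{i=1}^{n}$ generates $\ZZ^{d+1}$ as a group, normality yields
\[
\cone\bigl(\Pc \times \{1\}\bigr) \cap \ZZ^{d+1}
= \NN\bigl\{(\ab_i, 1) : 1 \leq i \leq n\bigr\},
\]
which is exactly the integer decomposition property for $\Pc$.

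For reflexivity, the star property of $\Delta$ means every maximal simplex has the form $\conv\{\mathbf{0}_d, \ab_{j_1}, \ldots, \ab_{j_d}\}$, and unimodularity forces its facet opposite $\mathbf{0}_d$ to span a lattice hyperplane of the form $\{\xb \in \RR^d : \langle \xb, \yb \rangle = 1\}$ with $\yb \in \ZZ^d$. Since these opposite facets cover the boundary of $\Pc$, every facet of $\Pc$ has an integer supporting hyperplane at lattice distance exactly one from $\mathbf{0}_d$, which is the standard criterion for $\Pc^{\vee}$ to be a lattice polytope. Hence $\Pc$ is reflexive.

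The main obstacle is confirming that the reverse lexicographic order with smallest variable $x_{i_1}$ indeed produces the star triangulation at $\ab_{i_1}$. This reduces to showing that no minimal generator of the squarefree monomial ideal $\textnormal{in}_<(I_A)$ is divisible by $x_{i_1}$, so that $\{i_1\}$ extends every face of the associated Stanley--Reisner complex. This follows from the way reverse lexicographic orders prefer monomials supported in larger variables; it is the only step that is not formal manipulation.
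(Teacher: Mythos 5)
The paper itself gives no proof of this lemma; it is imported verbatim as \cite[Lemma 1.1]{HMOS}, so there is nothing internal to compare against. Your argument is correct and is the standard proof of that result: a squarefree initial ideal yields a regular unimodular triangulation and normality of $K[A]$, normality together with the hypothesis that the points of $\Pc\times\{1\}$ generate $\ZZ^{d+1}$ gives the integer decomposition property, and the star property at the origin forces every facet hyperplane of $\Pc$ to be of the form $\{\xb : \langle \xb,\yb\rangle =1\}$ with $\yb\in\ZZ^d$, i.e.\ reflexivity. The one step you flagged does hold and is short: a minimal generator of ${\rm in}_{<}(I_{A})$ is the initial term $u$ of a binomial $u-v$ in the reduced Gr\"obner basis, which satisfies $\gcd(u,v)=1$ because $I_A$ is prime and contains no variable; since $I_A$ is homogeneous, if $x_{i_1}$ divided $u$ (and hence not $v$) the definition of the reverse lexicographic order with $x_{i_1}$ smallest would give $u<_{\rm rev}v$, a contradiction, so $i_1$ is a cone point of the initial complex.
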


By Theorem \ref{squarefree} and this lemma, we obtain the following corollary.
\begin{Corollary}
	\label{GorCor}
	Work with the same situation as in Theorem \ref{squarefree}.
		Let $\Pc \subset \RR^{d+1}$ be the lattice polytope of dimension $d+1$
		with $$\Pc \cap \ZZ^{d+1}=\left\{ 
		\left[ \begin{array}{c}
		{\bf a}_{1}\\
		1
		\end{array}
		\right]
		, \ldots,
		\left[ \begin{array}{c}
		{\bf a}_{n}\\
		1
		\end{array}
		\right]
		,
		\left[ \begin{array}{c}
		-{\bf b}_{1}\\
		-1
		\end{array}
		\right], \ldots, 
		\left[ \begin{array}{c}
		-{\bf b}_{m}\\
		-1
		\end{array}
		\right],
		   \mathbf{0}_{d+1}\right\}.$$
		Suppose that $\mathbf{0}_{d+1} \in \ZZ^{d+1}$ belongs to the interior of $\Pc$ and 
		any integer point in $\ZZ^{d+2}$ is a linear integer combination of the integer points in $\Pc \times \{1\}$.
		Then $\Pc$ is a reflexive polytope which possesses the integer decomposition property. 
\end{Corollary}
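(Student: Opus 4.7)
The plan is to deduce the corollary directly from Lemma \ref{HMOS}, using Theorem \ref{squarefree} to supply the only nontrivial ingredient, namely the existence of a suitable squarefree initial ideal. Essentially all of the real work has already been carried out in the theorem, so the proof reduces to checking that the hypotheses of Lemma \ref{HMOS} are met by the data $(\Pc, I_{[-B,A]^{*}}, <_{\rm rev})$.

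First I would identify $\Pc \cap \ZZ^{d+1}$ with the set of columns of $[-B,A]^{*}$. By construction the columns are exactly the vectors $[\ab_{i},1]^{\top}$, $[-\bb_{j},-1]^{\top}$ and $\mathbf{0}_{d+1}$ listed in the corollary, so the toric ideal that Lemma \ref{HMOS} associates to $\Pc$ is precisely the ideal $I_{[-B,A]^{*}} \subset K[\xb,\yb,z]$ studied in Theorem \ref{squarefree}. In particular the $n+m+1$ lattice points of $\Pc$ correspond bijectively to the $n+m+1$ variables of $K[\xb,\yb,z]$.

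Next I would invoke Theorem \ref{squarefree} to obtain a reverse lexicographic order $<_{\rm rev}$ on $K[\xb,\yb,z]$ for which $\mathrm{in}_{<_{\rm rev}}(I_{[-B,A]^{*}})$ is squarefree. The theorem arranges the order so that its smallest variable corresponds to the column $\mathbf{0}_{d+1}$ of $[-B,A]^{*}$; since $\pi^{*}(z) = s$, this smallest variable is $z$, and under the identification above it corresponds to the lattice point $\mathbf{0}_{d+1} \in \Pc$. The corollary hypothesizes that $\mathbf{0}_{d+1}$ lies in the interior of $\Pc$ and that $\ZZ^{d+2}$ is spanned as a $\ZZ$-module by $\Pc \times \{1\}$, which are the remaining input requirements of Lemma \ref{HMOS} (with the ambient dimension in the lemma taken to be $d+1$ rather than $d$). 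Applying Lemma \ref{HMOS} then yields that $\Pc$ is reflexive and possesses the integer decomposition property.

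There is no genuine obstacle in this argument; it is essentially a bookkeeping exercise to match the notation of Theorem \ref{squarefree} with that of Lemma \ref{HMOS}. The one point deserving care is to verify that the smallest variable produced by Theorem \ref{squarefree} really corresponds to the origin of $\Pc$, rather than to some other vertex or interior point of the polytope; this is immediate from the prescription $\pi^{*}(z) = s = \tb^{\mathbf{0}_{d}} t_{d+1}^{0} s$, which places $z$ in correspondence with the zero column of $[-B,A]^{*}$.
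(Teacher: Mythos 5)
Your proposal is correct and follows exactly the route the paper intends: the paper derives the corollary by combining Theorem \ref{squarefree} (which supplies the squarefree reverse lexicographic initial ideal of $I_{[-B,A]^{*}}$ with smallest variable $z$ corresponding to the column $\mathbf{0}_{d+1}$) with Lemma \ref{HMOS} applied in ambient dimension $d+1$. Your additional check that $z$ really corresponds to the origin of $\Pc$ is the right point to verify and is handled correctly.
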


Recall that an integer matrix $A$
is {\em compressed} (\cite{compressed}, \cite{Sul}) if the initial ideal 
of the toric ideal $I_{A}$ is squarefree with respect to 
any reverse lexicographic order.  

Finally, we prove Theorem \ref{Gorenstein}.
\begin{proof}[Proof of Theorem \ref{Gorenstein}]
For a finite simple graph $G$ on $[d]$,
let $A_{S(G)}$ be the matrix whose columns are those $\rho (W)$ with $W \in S(G)$.
If $W \in S(G)$, then  each subset of $W$ is also a stable set of $G$.
This means that $S(G)$ is a simplicial complex on $[d]$.
Hence it is easy to show that $A_{S(G_1)}$ and $A_{S(G_2)}$ are of harmony.
Moreover, for any perfect graph $G$, $A_{S(G)}$ is compressed (\cite[Example 1.3 (c)]{compressed}).
Let $\Pc \subset \RR^{d+1}$ be the convex hull of $\{\pm(\eb_1+\eb_{d+1}),\ldots,\pm(\eb_d+\eb_{d+1}),\pm\eb_{d+1}\}$.
Then it follows that  $\mathbf{0}_{d+1} \in \ZZ^{d+1}$ belongs to the interior of $\Pc$ and 
any integer point in $\ZZ^{d+2}$ is a linear integer combination of the integer points in $\Pc \times \{1\}$.
Moreover, we have $\Pc \subset \Omega(\Qc_{G_1}, \Qc_{G_2})$.
This implies that $\mathbf{0}_{d+1} \in \ZZ^{d+1}$ belongs to the interior of $\Omega(\Qc_{G_1}, \Qc_{G_2})$
and any integer point in $\ZZ^{d+2}$ is a linear integer combination of the integer points in $\Omega(\Qc_{G_1}, \Qc_{G_2}) \times \{1\}$.
On the other hand,
one has 
$$\Omega(\Qc_{G_1}, \Qc_{G_2}) \cap \{[a_1,\ldots,a_{d+1}]^{\top} \in \RR^{d+1} : a_{d+1}=0 \}=\dfrac{1}{2}(\Qc_{G_1} - \Qc_{G_2}) \times \{0\}.$$
Since $\dfrac{1}{2}(\Qc_{G_1} - \Qc_{G_2}) \cap \ZZ^d=\{{\bf 0}_d \}$,  we obtain
$$\Omega(\Qc_{G_1}, \Qc_{G_2}) \cap \ZZ^{d+1} =\left\{	\left[ \begin{array}{c}
{\bf a}\\
1
\end{array}
\right] : \ab \in \Qc_{G_1} \cap \ZZ^d \right\} \cup \left\{	\left[ \begin{array}{c}
-{\bf b}\\
-1
\end{array}
\right] : \bb \in \Qc_{G_2} \cap \ZZ^d \right\} \cup \{{\bf 0}_{d+1}\}.$$
Hence, by Corollary \ref{GorCor}, 
if $G_1$ and $G_2$ are perfect, $\Omega(\Qc_{G_1}, \Qc_{G_2})$ is a reflexive polytope which possesses the integer decomposition property.

Next, we prove that if $G_1$ is not perfect, then $\Omega(\Qc_{G_1}, \Qc_{G_2})$ does not possess the integer decomposition property.
Assume that $G_1$ is not perfect and $\Omega(\Qc_{G_1}, \Qc_{G_2})$ possesses the integer decomposition property.
By the strong perfect graph theorem (\cite{sptheorem}),  $G_1$ possesses either an odd hole or an odd antihole,
where an odd hole is an induced odd cycle of length $\geq 5$ and an odd antihole is the complementary graph of an odd hole.
Suppose that $G_1$ possesses an odd hole $C$ of length $2\ell+1$, where $\ell \geq 2$ and we regard $C$ as a finite graph on $[d]$.
We may assume that the edge set of $C$ is $\{\{i,i+1\} : 1\leq i \leq 2\ell \} \cup \{1,2\ell+1\} $.
Then the maximal stable sets of $C$ in $[2\ell+1]$ are 
$$S_1=\{1,3,\ldots,2\ell-1\},S_2=\{2,4,\ldots,2\ell \},\ldots,S_{2\ell+1}=\{2\ell+1,2,4,\ldots,2\ell-2\}$$
and each $i \in [2\ell+1]$ appears $\ell$ times in the above list.
For $1\leq i \leq 2\ell+1$, we set
$\vb_{i}=\sum_{j \in S_i}\eb_j+\eb_{d+1}$.
Then one has
$$\ab=\dfrac{\vb_1+\cdots+\vb_{2\ell+1}+(-\eb_{d+1})}{\ell}=\eb_1+\cdots+\eb_{2\ell+1}+2\eb_{d+1}.$$
Since $2 <(2\ell+2)/\ell \leq 3$, $\ab \in 3\Omega(\Qc_{G_1}, \Qc_{G_2})$.
Hence there exist $\ab_1,\ab_2,\ab_3 \in \Omega(\Qc_{G_1}, \Qc_{G_2}) \cap \ZZ^{d+1}$ such that $\ab=\ab_1+\ab_2+\ab_3$.
Then we may assume that $\ab_1,\ab_2 \in \Qc_{C}\times \{1\}$ and $\ab_3={\bf 0}_{d+1}$. 
However, since the maximal cardinality of the stable sets of $C$ in $[2\ell+1]$ equals $\ell$, a contradiction.

Suppose that $G_1$ possesses an odd antihole $C$ such that the length of $\overline{C}$ equals $2\ell+1$, where $\ell \geq 2$ and we regard $\overline{C}$ as a finite graph on $[d]$.
Similarly, we may assume that the edge set of $\overline{C}$ is $\{\{i,i+1\} : 1\leq i \leq 2\ell \} \cup \{1,2\ell+1\} $.
Then the maximal stable sets of $C$ are the edges of $\overline{C}$.
For $1 \leq i \leq 2\ell$, we set $\wb_i=\eb_i+\eb_{i+1}+\eb_{d+1}$ and set $\wb_{2\ell+1}=\eb_1+\eb_{2\ell+1}+\eb_{d+1}$.
Then one has
$$\bb=\dfrac{\wb_1+\cdots+\wb_{2\ell+1}+(-\eb_{d+1})}{2}=\eb_1+\cdots+\eb_{2\ell+1}+\ell\eb_{d+1}$$
and $\bb \in (\ell+1)\Omega(\Qc_{G_1}, \Qc_{G_2})$.
Hence there exist $\bb_1,\ldots,\bb_{\ell+1} \in \Omega(\Qc_{G_1}, \Qc_{G_2}) \cap \ZZ^{d+1}$ such that $\bb=\bb_1+\cdots+\bb_{\ell+1}$.
Then we may assume that $\bb_1,\ldots,\bb_{\ell} \in \Qc_{C}\times \{1\}$ and $\bb_{\ell+1}={\bf 0}_{d+1}$. 
However, since the maximal cardinality of the stable sets of $C$ in $[2\ell+1]$ equals $2$, a contradiction.

Therefore, if $\Omega(\Qc_{G_1}, \Qc_{G_2})$ possesses the integer decomposition property,
then $G_1$ and $G_2$ are perfect, as desired.
\end{proof}

\section{Ehrhart $\delta$-polynomials}
In this section, 
we consider the Ehrhart $\delta$-polynomials and the volumes of the polytopes $\Omega(\Qc_{G_1},\Qc_{G_2})$  and $\Gamma(\Qc_{G_1},\Qc_{G_2})$,
in particular, we prove Theorem \ref{delta}.
%we prove Theorem \ref{Ehrhart}.
Let $\Pc \subset \RR^d$ be a lattice polytope of dimension $d$ with $\Pc \cap \ZZ^d=\{\ab_1,\ldots,\ab_n\}$.
Set $A=[\ab_1,\ldots,\ab_n]$.
We define the toric ring $K[\Pc]$ and the toric ideal $I_{\Pc}$ of $\Pc$ by $K[A]$ and  $I_{A}$.
In order to prove Theorem \ref{delta}, we use the following facts. 
\begin{itemize}
	\item  
	If $\Pc$ possesses the integer decomposition property, then the \textit{Ehrhart polynomial} $\mid n\Pc \cap \ZZ^d \mid $ of $\Pc$ is equal to the Hilbert polynomial of 
	the toric ring $K[\Pc]$.  
	\item Let $S$ be a polynomial ring and $I \subset S$ be a graded ideal of $S$. 
	Let $<$ be a monomial order on $S$. 
	Then $S/I$ and $S/\mathrm{in}_{<}(I)$ have the same Hilbert function. 
	(see \cite[Corollary 6.1.5]{Monomial}).
\end{itemize}

Now, we prove the following theorem.
\begin{Theorem}
	\label{ehrhart}
	Work with the same situation as in Theorem \ref{squarefree}.
	Let $\Pc \subset \RR^d$ be the lattice polytope with $\Pc \cap \ZZ^d=\{\ab_1,\ldots,\ab_{n}\}$ and $\Qc \subset \RR^{d}$ the lattice polytope with $\Qc \cap \ZZ^d=\{\bb_1,\ldots,\bb_{m}\}$.
	Suppose that any integer point in $\ZZ^{d+1}$ is a linear integer combination of the integer points in $\Gamma(\Pc,\Qc) \times \{1\}$,
	any integer point in $\ZZ^{d+2}$ is a linear integer combination of the integer points in $\Omega(\Pc,\Qc) \times \{1\}$,	
$$\Gamma(\Pc,\Qc) \cap \ZZ^{d}=\{{\bf a}_{1}, \ldots, {\bf a}_{n-1} ,-\bb_1,\ldots,-\bb_{m-1},\mathbf{0}_{d}\}$$
and	
	$$\Omega(\Pc,\Qc) \cap \ZZ^{d+1}=\left\{ 
	\left[ \begin{array}{c}
	{\bf a}_{1}\\
	1
	\end{array}
	\right]
	, \ldots,
	\left[ \begin{array}{c}
	{\bf a}_{n}\\
	1
	\end{array}
	\right]
	,
	\left[ \begin{array}{c}
	-{\bf b}_{1}\\
	-1
	\end{array}
	\right], \ldots, 
	\left[ \begin{array}{c}
	-{\bf b}_{m}\\
	-1
	\end{array}
	\right],
	\mathbf{0}_{d+1}\right\}.$$
	Then we obtain 
	$$\delta(\Omega(\Pc,\Qc),\lambda)=(1+\lambda)\delta(\Gamma(\Pc,\Qc),\lambda).$$
	In particular, 
		$$\textnormal{vol}(\Omega(\Pc,\Qc))=2 \cdot \textnormal{vol}(\Gamma(\Pc,\Qc)).$$
\end{Theorem}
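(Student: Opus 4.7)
The plan is to interpret both Ehrhart series as Hilbert series of toric rings, to compute them via the squarefree initial ideals supplied by Theorem \ref{squarefree} (and its close analogue for $\Gamma$), and to read off the factor $1+\lambda$ from a tensor-product decomposition.

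First I would set $R_\Omega:=K[[-B,A]^{*}]$ and $R_\Gamma:=K[C]$, where $C$ is the $(d{+}1)\times(n{+}m{-}1)$ integer matrix whose columns are the points of $\Gamma(\Pc,\Qc)\cap\ZZ^{d}$ with a last coordinate $1$ appended. Corollary \ref{GorCor} applied to $\Omega(\Pc,\Qc)$ gives the integer decomposition property, and the very same Gr\"obner basis scheme applied to $C$---or, equivalently, \cite{harmony}---does so for $\Gamma(\Pc,\Qc)$. By the two facts recalled just before this theorem, we therefore have $\mathrm{Hilb}(R_\Omega,\lambda)=\delta(\Omega(\Pc,\Qc),\lambda)/(1-\lambda)^{d+2}$ and $\mathrm{Hilb}(R_\Gamma,\lambda)=\delta(\Gamma(\Pc,\Qc),\lambda)/(1-\lambda)^{d+1}$, and both Hilbert series are unchanged upon passage to initial ideals.

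The key combinatorial input is that no minimal generator of $\mathcal{M}_A$ involves $x_n$, and symmetrically no minimal generator of $\mathcal{M}_B$ involves $y_m$. Every binomial in $I_A$ factors as $\xb^{\gamma}\cdot(\xb^{\alpha'}-\xb^{\beta'})$ with $\min(\alpha'_n,\beta'_n)=0$, and minimal generators of $\mathrm{in}_{<_A}(I_A)$ arise only from such reduced binomials; supposing $\alpha'_n>0$, the rightmost nonzero entry of $\beta'-\alpha'$ sits at position $n$ with value $-\alpha'_n<0$, so the reverse lex initial monomial is $\xb^{\beta'}$, which is free of $x_n$. Because moreover $\mathrm{supp}(\ab_n)=\mathrm{supp}(\bb_m)=\emptyset$, every $(i,j)\in\mathcal{E}$ satisfies $i<n$ and $j<m$. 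Consequently the ideal $J'$ generated by $\mathcal{M}$ of Theorem \ref{squarefree} inside $K[\xb,\yb]$ decomposes as $J_\Gamma^{0}\cdot K[\xb,\yb]+(x_n y_m)$, where
\[
J_\Gamma^{0}:=(\mathcal{M}_A)+(\mathcal{M}_B)+\bigl(x_i y_j:(i,j)\in\mathcal{E}\bigr)\subset K[x_1,\ldots,x_{n-1},y_1,\ldots,y_{m-1}].
\]
Combined with the absence of $z$ in $\mathcal{M}$, this yields
\[
K[\xb,\yb,z]/\mathrm{in}_{<_{\mathrm{rev}}}(I_{[-B,A]^{*}})\ \cong\ (K[\xb',\yb']/J_\Gamma^{0})\otimes_{K}\bigl(K[x_n,y_m]/(x_n y_m)\bigr)\otimes_{K}K[z],
\]
and running the same argument on $I_C$---which is simpler because the $x_n y_m$-type relation is absent and the origin contributes a single variable $w$---gives $K[\xb',\yb',w]/\mathrm{in}(I_C)\cong(K[\xb',\yb']/J_\Gamma^{0})\otimes_{K}K[w]$.

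Since $\mathrm{Hilb}(K[x_n,y_m]/(x_n y_m),\lambda)=(1-\lambda^{2})/(1-\lambda)^{2}=(1+\lambda)/(1-\lambda)$, taking Hilbert series gives
\[
\mathrm{Hilb}(R_\Omega,\lambda)=\mathrm{Hilb}(K[\xb',\yb']/J_\Gamma^{0},\lambda)\cdot\frac{1+\lambda}{(1-\lambda)^{2}},\qquad \mathrm{Hilb}(R_\Gamma,\lambda)=\frac{\mathrm{Hilb}(K[\xb',\yb']/J_\Gamma^{0},\lambda)}{1-\lambda}.
\]
Hence $\mathrm{Hilb}(R_\Omega,\lambda)=\tfrac{1+\lambda}{1-\lambda}\,\mathrm{Hilb}(R_\Gamma,\lambda)$; multiplying by $(1-\lambda)^{d+2}$ delivers $\delta(\Omega(\Pc,\Qc),\lambda)=(1+\lambda)\,\delta(\Gamma(\Pc,\Qc),\lambda)$, and specializing at $\lambda=1$ gives $\mathrm{vol}(\Omega(\Pc,\Qc))=2\,\mathrm{vol}(\Gamma(\Pc,\Qc))$. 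The main obstacle I anticipate is precisely the verification that the minimal generators of $\mathcal{M}_A$ and $\mathcal{M}_B$ really do avoid $x_n$ and $y_m$, together with the adaptation of the proof of Theorem \ref{squarefree} to $I_C$ so that $\mathrm{in}(I_C)=J_\Gamma^{0}\cdot K[\xb',\yb',w]$; both hinge on the fact that the origin appears as the last column of $A$ and of $B$ and is the smallest variable in the reverse lexicographic order.
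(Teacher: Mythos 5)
Your proof is correct, but it produces the factor $1+\lambda$ by a genuinely different mechanism than the paper. The paper introduces the auxiliary polytope $\Rc=\textnormal{conv}\{\Gamma(\Pc,\Qc)\times\{0\},\pm\eb_{d+1}\}$, invokes the free-sum formula of \cite{BeckJayawantMcAllister} to get $\delta(\Rc,\lambda)=(1+\lambda)\delta(\Gamma(\Pc,\Qc),\lambda)$, and then shows that $I_{\Rc}$ and $I_{\Omega(\Pc,\Qc)}$ have the \emph{same} initial ideal $(\Mc)$ (via the auxiliary harmonious pair $A'$, $B'$), so the two toric rings share a Hilbert function. You instead compare $\Omega(\Pc,\Qc)$ with $\Gamma(\Pc,\Qc)$ directly: after observing that every monomial of $\Mc$ other than $x_ny_m$ lies in $K[x_1,\dots,x_{n-1},y_1,\dots,y_{m-1}]$, you split the quotient by $\ini_{<_{\rm rev}}(I_{[-B,A]^{*}})$ as a tensor product with the factor $K[x_n,y_m]/(x_ny_m)\otimes_K K[z]$, and the ratio of its Hilbert series $(1+\lambda)/(1-\lambda)^{2}$ to the single factor $1/(1-\lambda)$ contributed by the origin variable on the $\Gamma$ side yields $1+\lambda$ with no reference to free sums. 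The step you flag as the main obstacle is indeed the load-bearing one, and your argument for it is sound: a minimal generator of $\ini_{<_A}(I_A)$ is the initial term of a binomial with coprime terms, $I_A$ is homogeneous (every generator of $K[A]$ has $s$-degree one), and for a degree-compatible reverse lexicographic order with $x_n$ smallest the larger of two coprime monomials of equal degree cannot be divisible by $x_n$; the same applies to $y_m$, and $(i,j)\in\Ec$ forces $i<n$ and $j<m$ because $\supp(\ab_n)=\supp(\bb_m)=\emptyset$. Both routes still lean on \cite[Theorem 1.1]{harmony} (or a rerun of the Theorem \ref{squarefree} argument) to identify the initial ideal on the $\Gamma$ side, and on the hypotheses guaranteeing the integer decomposition property so that Ehrhart series equal Hilbert series. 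What your version buys is self-containedness: the citation to \cite{BeckJayawantMcAllister} disappears and the $1+\lambda$ is visible algebraically. What the paper's version buys is the intermediate identity $\delta(\Omega(\Qc_{G_1},\Qc_{G_2}),\lambda)=\delta(\Gamma(\Qc_{\widehat{G_1}},\Qc_{\widehat{G_2}}),\lambda)$ of Theorem \ref{delta}, which drops out of the comparison with $\Rc$ for free, whereas your route would need a small additional argument to recover it.
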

\begin{proof}
	Set $\Rc=\textnormal{conv}\{\Gamma(\Pc,\Qc) \times \{0\}, \pm \eb_{d+1}\}$.
	Then it follows from \cite[Theorem 1.4]{BeckJayawantMcAllister} that $\delta(\Rc,\lambda)=(1+\lambda)\delta(\Gamma(\Pc,\Qc),\lambda).$
	Moreover, by \cite[Theorem 1.1]{harmony} and Theorem \ref{squarefree},
	$\Rc$ and $\Omega(\Pc,\Qc)$ possess the integer decomposition property.
	Hence we should show that $K[\Rc]$ and $K[\Omega(\Pc,\Qc)]$ have the same Hilbert function.
	
	Now, use the same notation as in the proof of Theorem \ref{squarefree}.
	Then we have $$\dfrac{K[\xb,\yb,z]}{\text{in}_{<_{\text{ref}}}(I_{\Omega(\Pc,\Qc)})}=\dfrac{K[\xb,\yb,z]}{(\Mc)}.$$
	Set
	\begin{displaymath}
		\ab_i'=
		\begin{cases}
				\left[ \begin{array}{c}
			{\bf a}_{i}\\
			0
			\end{array}
			\right], &1\leq i \leq n-1,\\
			\eb_{d+1}, & i=n,\\
			{\bf 0_{d+1}}, & i=n+1,
		\end{cases}
		\ \text{and} \ 	\bb_j'=
		\begin{cases}
				\left[ \begin{array}{c}
			{\bf b}_{i}\\
			0
			\end{array}
			\right], &1\leq j \leq m-1,\\
			\eb_{d+1}, & j=m,\\
			{\bf 0_{d+1}}, &j=m+1.
		\end{cases}
	\end{displaymath}
	Then it is easy to show that $A'=[\ab_1',\ldots,\ab_{n+1}']$ and $B'=[\bb_1',\ldots,\bb_{m+1}']$ are of harmony.
	Moreover, $\text{in}_{<_{B'}}(I_{B'}) \subset K[y_1,\ldots,y_{m+1}]$ and $\text{in}_{<_{A'}}(I_{A'}) \subset K[x_1,\ldots,x_{n+1}]$ are squarefree with respect to reverse lexicographic orders $<_{A'}$ on $K[x_1,\ldots,x_{n+1}]$ and $<_{B'}$ on $K[y_1,\ldots,y_{m+1}]$ induced by the orderings
		 $ x_{n+1} <_{A'} x_n <_{A'} \cdots <_{A'} x_1$
		 and  
		 $y_{m+1} <_{B'} y_m <_{B'} \cdots <_{B'} y_1$.
		%\item $x_i <_{A'^{\sharp}} x_j$ if $x_i <_{A} x_j$.
		%\item $y_i <_{B'^{\sharp}} y_j$ if $y_i <_{B} y_j$.
		%\item $z$ is the smallest variable with respect $<_{A'^{\sharp}}$.
		%\item $z$ is the smallest variable with respect $<_{B'^{\sharp}}$.
	Now, we introduce the following:
	\begin{displaymath}
		\begin{aligned}
			{\mathcal E}'
			=
			\{ \, (i,j) \, : \, 1 \leq i \leq n, \, 1 \leq j \leq m, \, 
			{\rm supp} (\ab_i') \cap {\rm supp} (\bb_j') \neq 
			\emptyset \, \}. %&\cup \{ \, (i,j) \, : \, 1 \leq i \leq n, \, 1 \leq j \leq m, \, 
			%{\rm supp} (\ab_i) = 
			%\emptyset 
			%\, \}
		\end{aligned}
	\end{displaymath}
	Then we have $\Ec'=\Ec \cup \{(n,m)\}
	$.
	Let %write
	${\mathcal M}_{A'}$ (resp. ${\mathcal M}_{B'}$) be
	the minimal set of squarefree monomial generators of % the initial ideal 
	${\rm in}_{<_{A'}} (I_{A'})$
	(resp. ${\rm in}_{<_{B'}} (I_{B'})$).
	Then it follows that $\Mc_{A'}=\Mc_A$ and $\Mc_{B'}=\Mc_{B}$.
	This says that $\Mc=\Ec' \cup \Mc_{A'} \cup \Mc_{B'}$.
	%We recall that $<_{\rm rev}$ be the reverse lexicographic order on 
	%$K[\xb, \yb, z]$ induced by the ordering
	%\[
	%z <'_{\rm rev} x_n <'_{\rm rev} \cdots <'_{\rm rev} x_1 <'_{\rm rev} y_m <'_{\rm rev} \cdots <'_{\rm rev} y_1.
	%\]
	By the proof of \cite[Theorem 1.1]{harmony}, we obtain
	$\text{in}_{<_{\text{rev}}}(I_{\Rc})=(\Mc) \subset K[\xb,\yb,z]$.
	Hence it follows that 
	$$\dfrac{K[\xb,\yb,z]}{\text{in}_{<_{\text{rev}}}(I_{\Omega(\Pc,\Qc)})}=\dfrac{K[\xb,\yb,z]}{\text{in}_{<_{\text{rev}}}(I_{\Rc})}.$$
	Therefore, $K[\Rc]$ and $K[\Omega(\Pc,\Qc)]$ have the same Hilbert function, as desired.
\end{proof}

%Let $G$ be a finite simple graph on $[d]$
%with the edge set $E(G)$. The \textit{suspension} of the graph $G$ is the new graph $\widehat{G}$ whose vertex set is
%$[d+1] $ and whose edge set is $E(G)\cup \{\{i, d+1\} \mid i \in [d]\}$.

Now, we prove Theorem \ref{delta}.
\begin{proof}[Proof of Theorem \ref{delta}]
	For any finite simple graph $G$ on $[d]$,
	we have $S(\widehat{G})=S(G) \cup \{d+1\}$.
	Hence it follows that
	$\Gamma(\Qc_{\widehat{G_1}},\Qc_{\widehat{G_2}})=\textnormal{conv}\{\Gamma(\Qc_{G_1},\Qc_{G_1})\times \{0\}, \pm \eb_{d+1}\}$. 
	Therefore, by Theorem \ref{ehrhart}, we obtain 
	$$\delta(\Omega(\Qc_{G_1},\Qc_{G_2},\lambda)=\delta(\Gamma(\Qc_{\widehat{G_1}},\Qc_{\widehat{G_2}}),\lambda)=(1+\lambda)\delta(\Gamma(\Qc_{G_1},\Qc_{G_2}),\lambda),$$
	as desired.
\end{proof}
\section{Examples}
In this section, we give some curious examples of $\Gamma(\Qc_{G_1},\Qc_{G_2})$ and $\Omega(\Qc_{G_1},\Qc_{G_2})$.
At first, the following example says that even though $G_1$ and $G_2$ are not perfect, $\Omega(\Qc_{G_1},\Qc_{G_2})$ may be reflexive.
\begin{Example}
	\label{e1}
	Let $G$ be the finite simple graph as follows:
	\newline
	\begin{picture}(400,150)(10,50)
	\put(60,170){$G$:}
	\put(200,170){\circle*{5}}
	\put(250,120){\circle*{5}}
	\put(150,120){\circle*{5}}
	\put(175,70){\circle*{5}}
	\put(225,70){\circle*{5}}
	\put(225,70){\line(-1,0){50}}
	\put(175,70){\line(-1,2){25}}
	\put(150,120){\line(1,1){50}}
	\put(200,170){\line(1,-1){50}}
	\put(250,120){\line(-1,-2){25}}
	\end{picture}\\
	Namely, $G$ is a cycle of length $5$.
	Then $G$ is not perfect.
	Hence $\Gamma(\Qc_{G},\Qc_{G})$ is not reflexive.
	However, $\Omega(\Qc_{G},\Qc_{G})$ is reflexive.
	In fact, we have
	$$\delta(\Gamma(\Qc_{G},\Qc_{G}),\lambda)=1+15\lambda+60\lambda^2+62\lambda^3+15\lambda^4+\lambda^5,$$
	$$\delta(\Omega(\Qc_{G},\Qc_{G}),\lambda)=1+16\lambda+75\lambda^2+124\lambda^3+75\lambda^4+16\lambda^5+\lambda^6.$$
	Moreover, $\Gamma(\Qc_{G},\Qc_{G})$ possesses the integer decomposition property,
	but $\Omega(\Qc_{G},\Qc_{G})$ does not possess the integer decomposition property.
\end{Example}

For this example, $\Gamma(\Qc_{G},\Qc_{G})$ possesses the integer decomposition property.
Next example says that if $G_1$ and $G_2$ are not perfect, 
$\Gamma(\Qc_{G_1},\Qc_{G_2})$ may not possess the integer decomposition property.
\begin{Example}
		\label{e2}
	Let $G$ be a finite simple graph whose complementary graph $\overline{G}$ is as follows:
	\newline
	\begin{picture}(400,150)(10,50)
	\put(60,170){$\overline{G}$:}
	\put(100,170){\circle*{5}}
	\put(150,170){\circle*{5}}
	\put(200,120){\circle*{5}}
	\put(100,70){\circle*{5}}
	\put(150,70){\circle*{5}}
	\put(100,170){\line(1,0){50}}
	\put(150,170){\line(1,-1){50}}
	\put(200,120){\line(-1,-1){50}}
	\put(150,70){\line(-1,0){50}}
	\put(100,70){\line(0,1){100}}
	\put(200,120){\line(1,0){50}}
	\put(300,170){\circle*{5}}
	\put(350,170){\circle*{5}}
	\put(250,120){\circle*{5}}
	\put(300,70){\circle*{5}}
	\put(350,70){\circle*{5}}
	\put(300,170){\line(1,0){50}}
	\put(350,170){\line(0,-1){100}}
	\put(350,70){\line(-1,0){50}}
	\put(300,70){\line(-1,1){50}}
	\put(250,120){\line(1,1){50}}
	\end{picture}\\
	Then $G$ is not perfect.
	Hence $\Gamma(\Qc_{G},\Qc_{G})$ is not reflexive.
	However, $\Omega(\Qc_{G},\Qc_{G})$ is reflexive.
	Moreover, in this case, $\Gamma(\Qc_{G},\Qc_{G})$ and 
	$\Omega(\Qc_{G},\Qc_{G})$ do not possess the integer decomposition property.
\end{Example}

For any finite simple graph $G$ with at most $6$ vertices,
$\Omega(\Qc_{G},\Qc_{G})$ is always reflexive.
However, in the case of finite simple graphs with more than $6$ vertices, we obtain a different result.

\begin{Example}
	\label{e3}
	Let $G$ be the finite simple graph as follows:
	\newline
	\begin{picture}(400,150)(10,50)
	\put(60,170){$G$:}
	\put(180,170){\circle*{5}}
	\put(230,170){\circle*{5}}
	\put(280,170){\circle*{5}}
	\put(280,70){\circle*{5}}
	\put(130,120){\circle*{5}}
	\put(180,70){\circle*{5}}
	\put(230,70){\circle*{5}}
	\put(180,170){\line(1,0){50}}
	\put(230,170){\line(1,0){50}}
		\put(280,70){\line(0,1){100}}
	\put(280,70){\line(-1,0){50}}
	\put(230,70){\line(-1,0){50}}
	\put(180,70){\line(-1,1){50}}
	\put(130,120){\line(1,1){50}}
	\end{picture}\\
	Namely, $G$ is a cycle of length $7$. Then $G$ is not perfect.
	Hence $\Gamma(\Qc_{G},\Qc_{G})$ is not reflexive.
	Moreover, $\Omega(\Qc_{G},\Qc_{G})$ is not reflexive.
	In fact, we have
	$$\delta(\Gamma(\Qc_{G},\Qc_{G}),\lambda)=1+49\lambda+567\lambda^2+1801\lambda^3+1799\lambda^4+569\lambda^5+49\lambda^6+\lambda^7,$$
	$$\delta(\Omega(\Qc_{G},\Qc_{G}),\lambda)=1+50\lambda+616\lambda^2+2370\lambda^3+3598\lambda^4+2368\lambda^5+618\lambda^6+50\lambda^7+\lambda^8.$$
\end{Example}

Finally, we show that even though the Ehrhart $\delta$-polynomial of
$\Omega(\Qc_{G_1},\Qc_{G_2})$ coincides with that of  
$\Gamma(\Qc_{\widehat{G_1}},\Qc_{ \widehat{G_2}})$,
$\Omega(\Qc_{G_1},\Qc_{G_2})$ may not be unimodularly equivalent to 
$\Gamma(\Qc_{\widehat{G_1}}, \Qc_{ \widehat{G_2}})$.
\begin{Example}
	\label{e4}
	Let $G$ be the finite simple graph as follows:
	\newline
	\begin{picture}(400,150)(10,50)
	\put(60,170){$G$:}
	\put(180,170){\circle*{5}}
	\put(230,170){\circle*{5}}
	\put(280,120){\circle*{5}}
	\put(130,120){\circle*{5}}
	\put(180,70){\circle*{5}}
	\put(230,70){\circle*{5}}
	\put(180,170){\line(0,-1){100}}
	\put(180,170){\line(2,-1){100}}
	\put(180,170){\line(1,-2){50}}
	\put(180,170){\line(-1,-1){50}}
	\put(230,170){\line(0,-1){100}}
	\put(230,170){\line(1,-1){50}}
	\put(230,170){\line(-1,-2){50}}
	\put(230,170){\line(-2,-1){100}}
	\put(280,120){\line(-2,-1){100}}
	\put(280,120){\line(-1,0){150}}
	\put(230,70){\line(-1,0){50}}
	\put(230,70){\line(-2,1){100}}
	\end{picture}\\
	Namely, $G$ is a $(2,2,2)$-complete multipartite graph. Then $G$ is perfect.
	Hence we know that
	$\Omega(\Qc_{G},\Qc_{G})$ and $\Gamma(\Qc_{\widehat{G}},\Qc_{\widehat{G}})$ have the same Ehrhart $\delta$-polynomial and the same volume.
	However, $\Omega(\Qc_{G},\Qc_{G})$ has $54$ facets and $\Gamma(\Qc_{\widehat{G}},\Qc_{\widehat{G}})$ has $432$ facets.
	Hence, $\Omega(\Qc_{G},\Qc_{G})$ and $\Gamma(\Qc_{\widehat{G}},\Qc_{\widehat{G}})$  are not unimodularly equivalent.
	Moreover, for any finite simple graph $G'$ on $\{1,\ldots,7\}$ except for $\widehat{G}$, the Ehrhart $\delta$-polynomial of $\Gamma(\Qc_{G'},\Qc_{G'})$
	is not equal to that of $\Omega(\Qc_{G},\Qc_{G})$. 
	This implies that the class of $\Omega(\Qc_{G_1}, \Qc_{G_2})$ is a new class of reflexive polytopes.
\end{Example}

\end{document}